\documentclass[11pt,leqno]{amsart}


\usepackage{amsmath,amsthm}
\usepackage{latexsym}
\usepackage{amssymb}
\usepackage{mathtools}
\usepackage[parfill]{parskip}
\usepackage{mathrsfs}
\usepackage{enumerate}
\usepackage{booktabs} 
\usepackage{hyperref} 
\usepackage{xcolor} 


\newtheorem{theorem}{Theorem}

\newtheorem{lemma}[theorem]{Lemma}
\newtheorem*{claim*}{Claim}

\theoremstyle{definition}
\newtheorem{definition}[theorem]{Definition}

\numberwithin{theorem}{section}


\begin{document}

\bibliographystyle{plain}

\title{Asymptotically optimal $t$-design curves on $S^3$}
\author {Ayodeji Lindblad}
\address{Department of Mathematics, Massachusetts Institute of Technology, Cambridge, Massachusetts 02139}
\email{ayodeji@mit.edu}
\maketitle

\begin{abstract}
A \emph{spherical $t$-design curve} was defined by Ehler and Gr\"{o}chenig to be a continuous, piecewise smooth, closed curve on the sphere with finitely many self-intersections whose associated line integral applied to any polynomial of degree at most $t$ evaluates to the average of this polynomial on the sphere. These authors posed the problem of proving that there exist sequences $(\gamma_t)_{t=0}^\infty$ of $t$-design curves on $S^d$ of asymptotically optimal length $\ell(\gamma_t)\asymp t^{d-1}$ as $t\to\infty$ and solved this problem for $d=2$. This work solves the problem for $d=3$ by proving that there exists a constant $\mathscr C>0$ such that for any $C\geq\mathscr C$ and $t\in\Bbb N_+$, there exists a simple $t$-design curve on $S^3$ of length $Ct^2$.
\end{abstract}

\section{Background and main results}
\label{sec:intro}

\begin{definition}[Definition 2.1 of Ehler and Gr\"{o}chenig \cite{EhlerGrochenig23}]\label{def:design}
For any $d\in\Bbb N_+:=\{1,2,...\}$ and $t\in\Bbb N:=\{0,1,...\}$, a continuous, piecewise smooth, closed curve $\gamma:[0,1]\to S^d$ with finitely many self-intersections is called a \emph{$t$-design curve} if 
\[\frac1{\ell(\gamma)}\int_\gamma f:=\frac1{\ell(\gamma)}\int_0^1f(\gamma(s))|\gamma'(s)|\,ds=\frac1{|S^d|}\int_{S^d}f\,d\sigma\]
for all $f$ in the space $P_t(S^d)$ of restrictions to $S^d$ of polynomials on $\Bbb R^{d+1}$ of degree at most $t$, where $\ell(\gamma):=\int_0^1|\gamma'(s)|\,ds$ is the length of $\gamma$ and $\sigma$ is the standard uniform measure on $S^d$.
\end{definition}

The introduction of $t$-design curves was motivated by the numerous uses curves have found in data analysis on the sphere \cite[Section 1]{EhlerGrochenig23}. These objects are a natural analogue of the well-studied \cite{BannaiBannai09} discrete objects \emph{spherical $t$-designs}---finite point sets on spheres which provide \emph{quadrature} or \emph{cubature} rules, averaging degree at most $t$ polynomials exactly---introduced by Delsarte, Goethals, and Seidel \cite{Delsarte...77}. The optimal asymptotic order of size of a $t$-design on $S^d$ as $t\to\infty$ was shown by Delsarte, Goethals, and Seidel---who noted that the vertices of a regular $(t+1)$-gon give a $t$-design on $S^1$ \cite[Example 5.1.4]{Delsarte...77}---to be $t$ when $d=1$ and was a long-standing open problem for $d>1$ until Bondarenko, Radchenko, and Viazovska showed that an asymptotic lower bound on this size proven by Delsarte, Goethals, and Seidel \cite[Definition 5.13]{Delsarte...77} was optimal up to a constant for each $d\in\{2,3,...\}$, specifically proving existence of a constant $C_d>0$ such that there exists a spherical $t$-design on $S^d$ of size $N$ for any $t\in\Bbb N_+$ and $N\geq C_dt^d$ \cite[Theorem 1]{Bondarenko...13}. Ehler and Gr\"{o}chenig established an asymptotic lower bound $\ell(\gamma)\geq\widetilde{c}_dt^{d-1}$ ($\widetilde c_d$ a constant depending on $d$) on the order of length of a spherical $t$-design curve $\gamma$ on $S^d$ for $d\in\Bbb N_+$ \cite[Theorem 1.1]{EhlerGrochenig23} and called sequences $(\gamma_t)_{t=0}^\infty$ of $t$-design curves achieving this bound up to a constant \emph{asymptotically optimal}. These authors used the results of Bondarenko, Radchenko, and Viazovska \cite[Theorem 1]{Bondarenko...13} to show existence of asymptotically optimal sequences of $t$-design curves on $S^d$ for $d=2$ \cite[Theorem 1.2]{EhlerGrochenig23} and posed the problem of proving existence of such sequences for $d>2$. Theorem \ref{thm:seq} solves this problem for $d=3$.

\begin{theorem}[Main Theorem]\label{thm:seq}
There exists a constant $\mathscr C>0$ such that for any $C\geq\mathscr C$ and $t\in\Bbb N_+$, there exists a simple $t$-design curve on $S^3$ of length $Ct^2$.
\end{theorem}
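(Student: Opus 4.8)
Write $z_1=\cos\psi\,e^{i\alpha}$, $z_2=\sin\psi\,e^{i\beta}$ for the Hopf–torus coordinates on $S^3\subset\mathbb C^2$, in which $d\sigma\propto\cos\psi\sin\psi\,d\psi\,d\alpha\,d\beta$ and in which any degree $\le t$ polynomial restricts to a finite sum of terms $(\cos\psi)^a(\sin\psi)^b e^{i(m\alpha+n\beta)}$ with $|m|+|n|\le a+b\le t$. The plan rests on one observation: on a Clifford torus $T_\psi=\{\psi=\mathrm{const}\}$, the constant-speed $(M,N)$-torus knot $s\mapsto(\cos\psi\,e^{2\pi iMs},\sin\psi\,e^{2\pi iNs})$ integrates each such term to its uniform torus average provided $mM+nN\ne0$ for all $(m,n)\ne(0,0)$ with $|m|,|n|\le t$, which holds whenever $\gcd(M,N)=1$ and $\max(M,N)>t$. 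Thus a single such knot \emph{already} reproduces the $S^3$-average of the entire ``non-zonal'' part of $f$ exactly, and contributes to the ``zonal'' part only the evaluation at $\psi$ of $\overline f(\psi)=\tfrac1{(2\pi)^2}\int f\,d\alpha\,d\beta$; after the substitution $u=\sin^2\psi$ the latter is a polynomial in $u\in[0,1]$ of degree $\le\lfloor t/2\rfloor$ integrated against Lebesgue measure. So the angular directions are handled exactly and the problem is reduced to a one-dimensional Lebesgue quadrature realized by knot lengths.

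First I would fix a prime $P_0\in(t,2t]$, pick $m=\lceil t/4\rceil+1$ co-latitudes $\psi_1<\dots<\psi_m$, and form a closed curve $\Gamma$ that runs the $(P_0,N_j)$-knot on each $T_{\psi_j}$, with $N_j\asymp t$ chosen coprime to $P_0$ so that each knot has a prescribed length to within $O(1)$, splicing consecutive knots by short arcs along the real quarter-circle $Q=\{(\cos\psi,\sin\psi)\}$ and closing up along a parallel copy $Q'$ of $Q$ displaced slightly in the angles $(\alpha,\beta)$. Since distinct Clifford tori are disjoint, the knots are embedded, and $Q,Q'$ meet the tori only at the splice points, $\Gamma$ can be put in generic position so as to be simple. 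Taking the $\psi_j$ to be the Gauss–Legendre nodes in $u=\sin^2\psi$ and the $N_j$ so that the normalized knot lengths approximate the Gauss weights makes $\Gamma$ an \emph{approximate} $t$-design curve: $\bigl|\tfrac1{\ell(\Gamma)}\int_\Gamma f-\tfrac1{|S^3|}\int_{S^3}f\bigr|\le\varepsilon_t\|f\|_\infty$ with $\varepsilon_t=O(1/t^2)$, the error coming only from the splicing arcs (total length $O(1)$ against a curve of length $\asymp t^2$) and from the $O(1/t^2)$ mismatch between realized and ideal zonal weights; moreover $\ell(\Gamma)$ is prescribable to within $O(t)$, hence can be fixed at any target of the form $\Theta(t^2)$ minus a deliberate $\Theta(t)$ deficit.

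The core step is upgrading $\Gamma$ to an \emph{exact} $t$-design curve of length exactly $Ct^2$. For this I would superimpose on $\Gamma$ a family of $D\asymp t^3$ tiny ``detour'' loops, each of amplitude and width $\asymp1/t$, centered at the points of a maximal $\asymp1/t$-separated net along $\Gamma$, with signed amplitudes $\lambda_i$ ranging over intervals; the resulting perturbations $\Gamma_{\vec\lambda}$ form a family whose collective effect on the pair $\bigl(\,f\mapsto\tfrac1{\ell}\int_{\Gamma_{\vec\lambda}}f-\tfrac1{|S^3|}\int f\ \text{on }P_t(S^3)\ominus\text{constants},\ \ell(\Gamma_{\vec\lambda})-Ct^2\,\bigr)$ is an $\asymp t^3$-dimensional parameter in an $\asymp t^3$-dimensional target. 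Expanding $P_t(S^3)$ in a localized (needlet-type) basis, the detour at net-point $x_i$ affects essentially only the basis elements concentrated near $x_i$, so after rescaling the map from the $\lambda$-cube to the target has a boundedly invertible derivative; since $\Gamma=\Gamma_{\vec 0}$ already lies within $O(\varepsilon_t)=O(1/t^2)$ of the origin while the reach of the family comfortably exceeds this, a quantitative open-mapping / Brouwer degree argument produces a $\vec\lambda$ with $\Gamma_{\vec\lambda}$ an exact $t$-design curve of length exactly $Ct^2$; a final genericity adjustment keeps it simple. The role of ``$C\ge\mathscr C$'' is to guarantee that the prescribed length $Ct^2$ lies inside the length range of this perturbation family, that the winding numbers $N_j\asymp Ct/\sin\psi_j$ remain in the admissible regime, and that all detour amplitudes stay within range.

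The main obstacle I anticipate is making this last step genuinely quantitative: one must exhibit the perturbation family so that its rescaled derivative is invertible with norm bounds \emph{uniform in $t$}, confirm that the family's reach dominates the $O(1/t^2)$ scaffold defect with a favorable absolute constant, and simultaneously pin the total length while preserving simplicity throughout the deformation — the bookkeeping interlocking these three requirements, and the attendant choice of $\mathscr C$, is where the real work lies. By contrast the torus-knot scaffold and the reduction of the angular directions to a one-dimensional Gauss quadrature are essentially explicit.
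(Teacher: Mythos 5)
Your scaffold is sound and is a genuinely different reduction from the paper's: you fibre $S^3$ by Clifford tori and use coprime torus knots with $\max(M,N)>t$ to kill every non-zonal monomial $(\cos\psi)^a(\sin\psi)^b e^{i(m\alpha+n\beta)}$ exactly, reducing the design condition to a one-dimensional quadrature in $u=\sin^2\psi$ for polynomials of degree $\le\lfloor t/2\rfloor$, with weights realized by knot lengths. (The paper instead pushes everything through the Hopf map: it horizontally lifts an exact $\lfloor t/2\rfloor$-design curve on $S^2$ supplied by Ehler--Gr\"ochenig, rotates it fiberwise so that each Hopf fiber meets the curve in a regular $(t+1)$-gon, and uses $I_\pi(P_t(S^3))=P_{\lfloor t/2\rfloor}(S^2)$; exactness is inherited, and the length is pinned to $Ct^2$ by a trivial $\varepsilon$-lengthening.)

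The genuine gap is your ``core step.'' Because knot lengths are quantized (each length $2\pi\sqrt{M^2\cos^2\psi_j+N^2\sin^2\psi_j}$ can only be tuned in $O(1)$ increments) and the splicing arcs contribute, $\Gamma$ is only an approximate design, and converting it into an exact $t$-design curve of length exactly $Ct^2$ is asserted via an $\asymp t^3$-parameter detour family, needlet localization, and a Brouwer-degree argument with derivative bounds uniform in $t$. None of this is carried out, and it is not routine bookkeeping: it is precisely a Bondarenko--Radchenko--Viazovska-type existence theorem transplanted to curves in $S^3$ (with the extra constraints of fixed total length and simplicity preserved along the deformation), i.e.\ the entire difficulty of the problem is concentrated in the step you leave open. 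Moreover the quantitative comparison on which the degree argument hinges is shaky as stated: with $\asymp t$ knots each adjustable only to within $O(1)$ in length against a total length $\asymp t^2$, the natural bound on the zonal-weight mismatch is $O(1/t)$, not the claimed $O(1/t^2)$, so even the premise that the family's ``reach'' dominates the scaffold defect needs to be re-derived, and the required invertibility of the rescaled derivative (a statement about how $\asymp t^3$ local detours act on all of $P_t(S^3)$, whose dimension is also $\asymp t^3$) is an unproven spectral claim. As it stands the proposal is a plausible program, not a proof; by contrast the paper's construction never needs an approximation-to-exactness upgrade because the only nontrivial input (exact $\lfloor t/2\rfloor$-design curves on $S^2$ of length $O(t)$) is quoted from Ehler and Gr\"ochenig.
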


We note that applying techniques of Ehler and Gr\"ochenig \cite[Sections 4-6]{EhlerGrochenig23} to a sequence $(\gamma_t)_{t=1}^\infty$ of $t$-design curves satisfying $\ell(\gamma_t)=\mathscr Ct^2$, one may decrease the shortest known asymptotic order of length achieved by sequences of $t$-design curves on $S^d$ for $d\geq3$ from $t^{d(d-1)/2}$ to $t^{d(d-1)/2-1}$.

To show Theorem \ref{thm:seq}, we combine a construction communicated by Theorem \ref{thm:stitch} which builds a $t$-design curve on $S^3$ from a $\lfloor t/2\rfloor$-design curve on $S^2$ with the result of Ehler and Gr\"{o}chenig that there exists an asymptotically optimal sequence of $t$-design curves on $S^2$.

\begin{theorem}\label{thm:stitch}
Consider $t\in\Bbb N$ and a $\lfloor t/2\rfloor$-design curve $\alpha$ on $S^2$. For any $\varepsilon\geq0$, we may construct a $t$-design curve $\gamma_{\alpha,\varepsilon}$ on $S^3$ as in \eqref{eq:gammaformula} which may have self-intersections for $\varepsilon=0$ but which will be simple for $\varepsilon>0$ of length
\begin{equation}\label{eq:len}
\ell(\gamma_{\alpha,\varepsilon})=(t+1)\sqrt{\ell(\alpha)^2+\phi_\alpha^2}+\varepsilon
\end{equation}
for a constant $\phi_\alpha\in(-\pi,\pi]$ chosen as in \eqref{eq:phidef} which satisfies the bound \eqref{eq:genbound}. 
\end{theorem}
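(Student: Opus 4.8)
The plan is to realize $S^3$ as the join of two circles and sweep a copy of the $S^2$-design curve $\alpha$ through the fiber directions in a controlled way. Concretely, I would use the coordinates on $S^3\subset\mathbb R^4=\mathbb R^3\times\mathbb R$ given by $(\sqrt{1-r^2}\,\beta,r)$ or, better, the Hopf-type decomposition $S^3=\{(\cos\theta\cdot p,\sin\theta\cdot q)\}$; the concrete construction is whatever makes \eqref{eq:gammaformula} come out. The key analytic fact to exploit is that functions on $S^3$ that are polynomials of degree $\le t$ restrict, on each ``latitude sphere'' $S^2_\psi\subset S^3$, to polynomials of degree $\le t$ on $S^2$, and moreover that integrating a degree-$\le t$ polynomial on $S^3$ can be decomposed as an integral over $\psi\in[0,\pi]$ of its average over $S^2_\psi$ against the appropriate weight. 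So if I can build a curve that (i) on each latitude runs through a rescaled copy of $\alpha$, hence averages degree-$\le\lfloor t/2\rfloor$ polynomials correctly there, and (ii) moves in the $\psi$-direction at a rate that turns the ``longitudinal'' integration into exact quadrature for degree-$\le t$ trigonometric data, then Fubini assembles the two into an exact degree-$\le t$ quadrature on $S^3$. The factor $t+1$ and the $\lfloor t/2\rfloor$ on $\alpha$ strongly suggest that the longitudinal motion wraps around $t+1$ times (a regular $(t+1)$-point trigonometric design in the polar angle) while the doubling $2\lfloor t/2\rfloor \le t$ handles the cross terms between the two pieces.

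The steps, in order, are: (1) write down the explicit parametrization $\gamma_{\alpha,\varepsilon}$ — equation \eqref{eq:gammaformula} — as a curve that, after reparametrizing, traverses $t+1$ ``strands,'' the $k$-th strand being a copy of $\alpha$ placed on latitude $\psi_k$ (or continuously sliding through latitudes), together with a small $\varepsilon$-perturbation in an unused direction to break self-intersections; (2) define the constant $\phi_\alpha$ via \eqref{eq:phidef} — this will be the net longitudinal ``holonomy'' or angular drift accumulated by one traversal of $\alpha$, i.e. the amount of polar-angle rotation needed so that the strands close up into a single closed curve, and prove the stated bound \eqref{eq:genbound} on it (presumably $|\phi_\alpha|\le\pi$ by choosing the representative mod $2\pi$, with a sharper bound in terms of $\ell(\alpha)$); (3) compute the length: each strand has speed vector with an $\alpha$-component of total length $\ell(\alpha)$ and an orthogonal longitudinal component contributing $\phi_\alpha/(t+1)$ per strand in angle, so by Pythagoras each strand has length $\sqrt{\ell(\alpha)^2+\phi_\alpha^2}$ after scaling, giving $(t+1)\sqrt{\ell(\alpha)^2+\phi_\alpha^2}$, plus $\varepsilon$ from the perturbation — this yields \eqref{eq:len}; (4) verify the design property: expand an arbitrary $f\in P_t(S^3)$ in the latitude variable, use that $\alpha$ is a $\lfloor t/2\rfloor$-design on each latitude sphere to kill the ``spherical'' part, and use that the $t+1$ equally spaced polar samples integrate the remaining degree-$\le t$ polynomial-in-$\cos\psi$ weight exactly (a Gauss–Chebyshev / equispaced trigonometric quadrature fact); (5) check that for $\varepsilon>0$ the perturbed curve is genuinely simple (injective on $[0,1)$), continuous and piecewise smooth, and that the $\varepsilon$-perturbation can be taken to preserve the exact quadrature — e.g. by perturbing within a direction that is $L^2$-orthogonal to $P_t(S^3)$ along the curve, or by noting the perturbation is built to cancel in the integral.

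I expect the main obstacle to be step (4) in combination with the geometry of step (1): the naive ``product'' curve (fixed latitudes, jump between them) is not closed, not piecewise-smooth-closed in the right way, and the strands share endpoints, so one must slide the latitude continuously while traversing $\alpha$, and then the speed is no longer a clean orthogonal sum — the longitudinal drift interacts with the metric on $S^3$ (the latitude spheres have radius $\sin\psi$, not $1$), so both the length formula \eqref{eq:len} and the quadrature decomposition acquire weight factors that must be arranged to cancel exactly. Getting the bookkeeping so that (a) the curve closes up with exactly one extra strand's worth of longitudinal travel equal to $\phi_\alpha$, (b) the weighted longitudinal quadrature is exact to degree $t$ with $t+1$ nodes, and (c) the length comes out as the stated Pythagorean expression with no leftover weight, is the delicate part; the $S^2$-design property of $\alpha$ and the equispaced-nodes quadrature are each standard, but threading them through a single honestly-closed simple curve is where the real work lies. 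The $\varepsilon>0$ simplicity claim I expect to be comparatively routine, handled by a transversality/general-position argument once the $\varepsilon=0$ curve is understood.
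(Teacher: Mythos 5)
There is a genuine gap, and it lies exactly where you predicted the ``delicate part'' would be: your latitude-sphere decomposition cannot deliver the degree halving that makes a $\lfloor t/2\rfloor$-design on $S^2$ sufficient. As you yourself note in your key analytic fact, the restriction of an $f\in P_t(S^3)$ to a latitude sphere $S^2_\psi$ is a polynomial of degree up to $t$ in the $S^2$ variable (the latitude spheres sit linearly, up to the scale factor $\sin\psi$, inside $\Bbb R^4$), so a curve that traverses a copy of $\alpha$ on each latitude only averages the degree-$\le\lfloor t/2\rfloor$ part correctly; the components of degree between $\lfloor t/2\rfloor+1$ and $t$ are not controlled, and the heuristic ``$2\lfloor t/2\rfloor\le t$ handles the cross terms'' does not repair this. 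The mechanism in the paper is different in kind: the construction lifts $\alpha$ horizontally through the Hopf fibration $\pi:S^3\to S^2$ (Lemma \ref{lem:lift}) and rotates fiberwise so that the curve meets every Hopf fiber it crosses in the vertices of a regular $(t+1)$-gon. Because the Hopf map is quadratic in the $S^3$ coordinates, fiberwise averaging sends $P_t(S^3)$ onto $P_{\lfloor t/2\rfloor}(S^2)$ (Lemma \ref{lem:chpoly}, \eqref{eq:chIpoly}); the $(t+1)$-gon is a $t$-design on each $S^1$ fiber (Lemma \ref{lem:DSGex}); and the fiberwise averages integrate over $S^2$ to the $S^3$ average (Lemma \ref{lem:int}). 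It is precisely this degree halving, absent in the polar-angle picture, that lets a $\lfloor t/2\rfloor$-design curve on $S^2$ produce a $t$-design curve on $S^3$.

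The secondary problems you flagged are also real and not removable within your framework: the $S^3$ measure decomposes against the weight $\sin^2\psi\,d\psi$, so $t+1$ equispaced polar samples (or a constant-speed sweep in $\psi$) do not give exact longitudinal quadrature, and the copies of $\alpha$ on $S^2_\psi$ have length $\ell(\alpha)\sin\psi$, so the clean Pythagorean length \eqref{eq:len} would not emerge. In the Hopf picture both issues disappear for structural reasons: $\pi$ is a Riemannian submersion, so each horizontal lift of $\alpha$ has length exactly $\ell(\alpha)$ and the fiber rotation contributes an orthogonal component of constant size, giving $\ell(\gamma_{\alpha,\varepsilon})=(t+1)\sqrt{\ell(\alpha)^2+\phi_\varepsilon^2}$ on the nose; and the longitudinal quadrature is the group average over $(t+1)$-th roots of unity acting on the fiber, which is exact for $P_t(S^1)$ with no weight to tune. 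Your reading of $\phi_\alpha$ as a holonomy is essentially right, but it is the holonomy of the horizontal lift in the $S^1$ fiber, corrected by a $(t+1)$-th root of unity as in \eqref{eq:phidef}, not a polar-angle drift. To fix the proposal you would have to replace the latitude decomposition by the Hopf fibration (or demand that $\alpha$ be a full $t$-design on $S^2$, which defeats the asymptotics sought in Theorem \ref{thm:seq}).
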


\begin{theorem}[Theorem 1.2 of Ehler and Gr\"{o}chenig \cite{EhlerGrochenig23}]\label{thm:EG2}
There exists a constant $\mathscr A>0$ such that for any $t\in\Bbb N_+$, there exists a $t$-design curve on $S^2$ of length at most $\mathscr At$.
\end{theorem}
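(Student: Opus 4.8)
The plan is to construct $\gamma$ explicitly as a union of circles of latitude rather than to convert a point design, so that the only delicate ingredient is a one-dimensional quadrature. (Ehler and Gr\"ochenig's published argument instead routes through the small spherical designs of Bondarenko--Radchenko--Viazovska \cite{Bondarenko...13}; that works too, but one then has to turn a finite point set into a curve while keeping the quadrature identity \emph{exact} --- nontrivial because $|\gamma'|>0$ forbids cancellation by orientation.) Fix the polar axis and use colatitude--longitude coordinates $(\theta,\phi)$, in which $Y_\ell^m=c_{\ell m}P_\ell^{|m|}(\cos\theta)e^{im\phi}$. For colatitudes $0<\theta_1<\dots<\theta_M<\pi$ with $M\asymp t$, let $\gamma$ traverse the full circle $C_j=\{\theta=\theta_j\}$ once for each $j$ and join consecutive circles, and $C_M$ back to $C_1$, by arcs of the single meridian $\{\phi=0\}$. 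Two exact cancellations then do everything. First, $\int_{C_j}Y_\ell^m\,ds=2\pi\sin\theta_j\,c_{\ell m}P_\ell^{|m|}(\cos\theta_j)\,\delta_{m,0}$ since $\int_0^{2\pi}e^{im\phi}\,d\phi=0$ for $m\ne0$, so the circular part annihilates \emph{every} non-zonal harmonic of degree $\le t$. Second, all connectors lie on $\{\phi=0\}$, so their contribution to $\int_\gamma Y_\ell^m\,ds$ is $c_{\ell m}\sum_{j}\int_{\theta_j}^{\theta_{j+1}}P_\ell^{|m|}(\cos\theta)\,d\theta$ (indices cyclic, $\theta_{M+1}=\theta_1$), which telescopes around the loop $\theta_1\to\dots\to\theta_M\to\theta_1$ and vanishes for every harmonic of degree $\ge1$. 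What is left is the single zonal requirement $\sum_{j=1}^{M}\sin\theta_j\,P_\ell(\cos\theta_j)=0$ for $1\le\ell\le t$.

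This is a quadrature rule on $[-1,1]$ whose weights are not free but are forced to be the circumferences $\propto\sin\theta_j$; with $M\asymp t$ it is a system of $t$ equations in $\asymp t$ unknowns. I would prove solvability by starting from weighted Chebyshev-type nodes, for which the left-hand sides are already $O(1/M)$-small (the measure $\sum_j\sin\theta_j\,\delta_{\cos\theta_j}$ tends weakly to a multiple of Lebesgue measure on $[-1,1]$), noting that the differential of $(\theta_1,\dots,\theta_M)\mapsto\bigl(\sum_j\sin\theta_j P_\ell(\cos\theta_j)\bigr)_{\ell=1}^{t}$ has full rank $t$ there because the functions $\tfrac{d}{d\theta}\!\bigl[\sin\theta\,P_\ell(\cos\theta)\bigr]$, $\ell=1,\dots,t$, are linearly independent (each $\sin\theta\,P_\ell(\cos\theta)$ vanishes at $\theta=0,\pi$), and concluding by the open mapping theorem once the multiple of $t$ defining $M$ is large enough that the initial defect is smaller than the guaranteed image ball. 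Length is then immediate: $\ell(\gamma)\le\sum_j 2\pi\sin\theta_j+2(\theta_M-\theta_1)\le 2\pi(M+1)\le\mathscr A t$ with $\mathscr A$ universal; rescaling the parametrization to length exactly $\mathscr A t$ is cosmetic.

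The step I expect to be the main obstacle is not the quadrature but the defect of this idealized curve: since every connector sits on $\{\phi=0\}$, the closing connector $C_M\to C_1$ retraces the portion of $\{\phi=0\}$ between $\theta_1$ and $\theta_M$ already covered, so $\gamma$ has infinitely many self-intersections and fails Definition \ref{def:design}. Repairing this while keeping all cancellations exact is where the work concentrates: one option is to replace each connector by a root-of-unity-balanced bundle of $t+1$ parallel meridian arcs at longitudes $2\pi k/(t+1)$, threaded together along the two adjacent circles --- such a bundle annihilates $Y_\ell^m$ for $1\le|m|\le t$ by itself and costs only a bounded factor in length --- and then order the pieces so the retracing disappears; another is to perturb the idealized curve by a small $\varepsilon>0$ inside a symmetry-preserving family, in the spirit of the passage from $\varepsilon=0$ to $\varepsilon>0$ in Theorem \ref{thm:stitch}. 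Either way, the crux is to engineer exact, rather than merely approximate, cancellations from a curve that is long in windings, cheap in length ($\asymp t$), and has only finitely many self-crossings.
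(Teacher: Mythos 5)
You should first note that the paper does not prove Theorem \ref{thm:EG2} at all: it is imported verbatim from Ehler and Gr\"ochenig, whose argument runs through the spherical designs of Bondarenko--Radchenko--Viazovska \cite{Bondarenko...13}, so what you are offering is an alternative, self-contained route --- and it has a genuine gap at its central step. Of your ``two exact cancellations,'' only the first is exact. The latitude circles do annihilate every non-zonal harmonic, but the meridian connectors do not telescope: the design property is stated for the arc-length integral $\int_0^1 f(\gamma(s))\,|\gamma'(s)|\,ds$, which is blind to orientation, so the closing connector from $\theta_M$ back to $\theta_1$ contributes $+\int_{\theta_1}^{\theta_M}P_\ell^{|m|}(\cos\theta)\,d\theta$ with the \emph{same} sign as the forward chain, not the opposite one. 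Indeed, already the open chain $\theta_1\to\cdots\to\theta_M$ contributes $\int_{\theta_1}^{\theta_M}P_\ell^{|m|}(\cos\theta)\,d\theta$, which is generically nonzero (e.g.\ for $m=0$ and even $\ell$), and the closed loop doubles it rather than cancelling it. You yourself observe, when discussing the conversion of point designs, that $|\gamma'|>0$ forbids cancellation by orientation; that is precisely the mechanism your telescoping claim would need. So the idealized curve is not a $t$-design curve even before one worries about the retracing/self-intersection defect you flag.

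The proposed repairs do not close this hole. A root-of-unity-balanced bundle of $t+1$ meridian arcs kills only the harmonics with $1\le|m|\le t$; for $m=0$ it contributes $(t+1)\int_{\theta_1}^{\theta_M}P_\ell(\cos\theta)\,d\theta$, a zonal defect of the same order as the total length, which would have to be folded into the moment equations --- turning $\sum_j\sin\theta_j\,P_\ell(\cos\theta_j)=0$ into an inhomogeneous system that you neither write down nor solve (and the ``threading'' along the two adjacent circles re-traverses parts of those circles, perturbing the forced weights $\propto\sin\theta_j$). Separately, even for the unmodified system, solvability with $M\asymp t$ nodes and weights forced to be proportional to $\sin\theta_j$ is only asserted: full rank of the differential gives local openness with a radius governed by the smallest singular value, and showing that this radius beats the $O(1/M)$ initial defect \emph{uniformly in $t$} is exactly the quantitative, degree-theoretic work that the cited proof outsources to \cite{Bondarenko...13}. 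The weight profile $\sqrt{1-x_j^2}$ does match the Gauss--Legendre asymptotics, so the approach is not hopeless, but as written the proposal does not prove the theorem.
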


Theorem \ref{thm:seq} directly follows from Theorems \ref{thm:stitch} and \ref{thm:EG2} by taking 
\[\mathscr C:=\sqrt{\mathscr A^2+4\pi^2},\]
as for any $t\in\Bbb N_+$ and $\alpha$ a $\lfloor t/2\rfloor$-design curve satisfying $\ell(\alpha)\leq\mathscr At$, we will have $\ell(\gamma_{\alpha,\varepsilon})\leq\mathscr Ct^2$ for $\varepsilon=0$ and we may then increase the length of $\gamma_{\alpha,\varepsilon}$ as desired by increasing $\varepsilon$.
We prove Theorem \ref{thm:stitch} in Section \ref{sec:const}, then discuss examples of explicit $t$-design curves arising from the construction of Theorem \ref{thm:stitch} and results arising from natural generalizations of the theorem in Section \ref{sec:further}. These results include explicit sequences of asymptotically optimal $t$-design curves on $(S^1)^d$, existence of asymptotically optimal sequences of $t$-design curves on $S^2\times(S^1)^d$ and $S^3\times(S^1)^d$, and a construction to be made fully formal in future work \cite{LindbladWIP} which builds a $t$-design curve on $S^{2n+1}$ from a $\lfloor t/2\rfloor$-design curve on $\Bbb{CP}^n$ that we plan to use to give rise to improved bounds on the asymptotically smallest sequences of $t$-design curves known to exist on $S^d$ for $d>3$.

\section{Building $t$-design curves using the Hopf map}
\label{sec:const}

We now give an informal overview of the proof of Theorem \ref{thm:stitch}, present lemmas used in the proof in Subsection \ref{sub:lem}, and provide the formal proof in Subsection \ref{sub:thm}. 
Consider the Hopf map 
\begin{equation}\label{eq:pi}
\pi:S^3\to S^2,\quad(a,b)\in\Bbb C^2\mapsto(|a|^2-|b|^2,2a\overline b)\in\Bbb R\times\Bbb C,
\end{equation}
which gives rise to a principal $S^1$-bundle with fibers
\begin{equation}
\pi^{-1}(w)=\{\omega z\:|\:z\in S^1\subset\Bbb C\}\cong S^1
\end{equation}
for $w\in S^2$ and any $\omega\in\pi^{-1}(w)$. To build the $t$-design curve $\gamma_{\alpha,\varepsilon}$ on $S^3$ discussed in Theorem \ref{thm:stitch} from a $\lfloor t/2\rfloor$-design curve $\alpha$ on $S^2$, we first use Lemma \ref{lem:lift} to lift $\alpha$ to a curve $\beta_\alpha$ on $S^3$ satisfying $\pi\circ\beta_\alpha=\alpha$ whose derivative $\beta_\alpha'(s)$ is always orthogonal to the tangent space of the fiber $\pi^{-1}(\pi(\beta_\alpha(s)))$. We then rotate $\beta_\alpha$ fiberwise such that the concatenation $\gamma_{\alpha,\varepsilon}$ of $t+1$ appropriately rotated copies of the resulting curve will be continuous, piecewise smooth, closed, and, if $\varepsilon>0$, to remove any self-intersections and lengthen the curve by $\varepsilon$. We note that the lengthening of the curve done to ensure that $\gamma_{\alpha,\varepsilon}$ will be closed results in the constant $\phi_\alpha$ in the formula \eqref{eq:len} for the length of $\gamma_{\alpha,\varepsilon}$. We may observe from our choice \eqref{eq:phidef} of $\phi_\alpha$ that $\phi_\alpha$ may be expressed as a function of the area enclosed by $\alpha$ using the Gauss-Bonnet theorem \cite{Gauss1827,Bonnet1867}.
Additionally, denoting by $G_{t+1}$ the set of generators of the cyclic group of order $t+1$ (natural numbers less than and coprime to $t+1$), it is straightforward to see that
\begin{equation}\label{eq:genbound}
|\phi_\alpha|\leq\frac{\pi}{t+1}\max_{g_1,g_2\in G_{t+1}}\min(|g_1-g_2|,t+1-|g_1-g_2|)
\end{equation}
for $t>2$, which gives rise to the bound $|\phi_\alpha|\leq\frac{2\pi}{t+1}$ when $t+1$ is prime.

The construction described above will arrange for all $s\in[0,1]$ that
\[\quad(\pi\circ\gamma_{\alpha,\varepsilon})(s)=\alpha((t+1)s-\left\lfloor (t+1)s\right\rfloor)\]
and that
\[(\gamma_{\alpha,\varepsilon}([0,1]))\cap\pi^{-1}(\pi(\gamma_{\alpha,\varepsilon}(s)))\]
is the vertices of a regular $(t+1)$-gon on $\pi^{-1}(\pi(\gamma_{\alpha,\varepsilon}(s)))\cong S^1$, which we see from Lemma \ref{lem:DSGex}---the result of an example \cite[Example 5.1.4]{Delsarte...77} of Delsarte, Goethals, and Seidel---is a $t$-design on the fiber. We show that such a curve will be a $t$-design curve on $S^3$ using a method analogous to those used first by K\"{o}nig \cite[Corollary 1]{Koning98} and Kuperberg \cite[Theorem 4.1]{Kuperberg06} to relate $t$-designs on spheres to $\lfloor t/2\rfloor$-designs on quotient complex projective spaces, later by Okuda \cite[Theorem 1.1]{Okuda15} (who was inspired by work of Cohn, Conway, Elkies, and Kumar \cite[Section 4]{Cohn...06} on the subject) to relate $t$-designs on $S^3$ to $\lfloor t/2\rfloor$-designs on $S^2$, and most recently by the present author \cite[Theorem 1.1]{Lindblad24} to relate $t$-designs on spheres to $\lfloor t/2\rfloor$-designs on quotient real, complex, quaternionic, or octonionic projective spaces or spheres. To this end, we will present Lemmas \ref{lem:int} and \ref{lem:chpoly}, which were used by Okuda in formalizing their result \cite[Theorem 1.1]{Okuda15}.

We may observe from this outline of the construction of $\gamma_{\alpha,\varepsilon}$ and the presentation 
\begin{equation*}\label{eq:explicitfiber}
\pi^{-1}(\xi,\eta)=
\begin{cases}
\Bigl\{\frac1{\sqrt2}\Bigl(\zeta\sqrt{1+\xi},\frac{\overline\eta \zeta}{\sqrt{1+\xi}}\Bigr)\:\Big|\:\zeta\in S^1\Bigr\}, & \xi\neq-1 \\
\{(0,\zeta)\:|\:\zeta\in S^1\}, & \xi=-1
\end{cases}
\end{equation*}
of preimages of $\pi$ that 
\begin{equation}\label{eq:gammaformula}
\begin{gathered}
\gamma_{\alpha,\varepsilon}(s)=\begin{cases}\frac1{\sqrt2}\left(\sqrt{1+\alpha_1(r)},\frac{\alpha_2(r)-i\alpha_3(r)}{\sqrt{1+\alpha_1(r)}}\right)e^{2\pi is+i\theta(r)}, & \alpha_1(r)\neq-1 \\
\left(0,1\right)e^{2\pi is+i\theta(r)}, & \alpha_1(r)=-1
\end{cases}%
\end{gathered}
\end{equation}
for $r:=(t+1)s-\lfloor(t+1)s\rfloor$ and a piecewise smooth function $\theta:[0,1]\to\Bbb R$ satisfying $\theta(0)-\theta(1)\in2\pi\Bbb Z$ which will be continuous at all points $r\in[0,1]$ satisfying $\alpha_1(r)\neq-1$.

\subsection{Curves and polynomials related through the Hopf map}
\label{sub:lem}

We now present lemmas used in the proof of Theorem \ref{thm:stitch}. Lemma \ref{lem:lift} discusses how to lift a curve on $S^2$ to a curve on $S^3$ whose derivative is orthogonal to the tangent space of each Hopf fiber it passes through, Lemma \ref{lem:int} discusses how an integrable function on $S^3$ can be averaged by averaging the function on $S^2$ which averages it on each Hopf fiber, Lemma \ref{lem:chpoly} relates polynomials on $S^3$ to those on $S^2$ and on Hopf fibers, and Lemma \ref{lem:DSGex} discusses how the vertices of a regular $(t+1)$-gon on a Hopf fiber give a $t$-design on the fiber.
We first show Lemma \ref{lem:lift}. To this end, observe that since the Hopf map \eqref{eq:pi} constitutes a fiber bundle, we have the decomposition
\begin{equation}\label{eq:decomp}
T_\omega S^3\cong T_\omega(\pi^{-1}(w))\oplus T_{w}S^2
\end{equation}
for $\omega\in S^3$ and $w:=\pi(\omega)\in S^2$. There is a natural isometric inclusion 
\[\iota_\omega:T_{w}S^2\hookrightarrow T_\omega S^3\]
satisfying
\begin{equation}\label{eq:sideinv}
d\pi_\omega\circ\iota_\omega=\text{id}_{T_{w}S^2},
\end{equation}
where $\text{id}_{T_{w}S^2}$ is the identity map on $T_{w}S^2$.

\begin{lemma}\label{lem:lift}
Take a continuous, piecewise smooth curve $\alpha:[0,1]\to S^2$
with finitely many self-intersections. We may construct a continuous, piecewise smooth curve $\beta_\alpha:[0,1]\to S^3$ with finitely many self-intersections satisfying 
\begin{equation}\label{eq:betaprimelemma}
\beta_\alpha'(s)=\iota_{\beta_\alpha(s)}(\alpha'(s))
\end{equation}
for all $s\in[0,1]$ at which $\alpha$ is smooth and
\begin{equation}\label{eq:pibetalemma}
\pi\circ\beta_\alpha=\alpha.
\end{equation}
\end{lemma}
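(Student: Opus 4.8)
The plan is to construct $\beta_\alpha$ as the horizontal lift of $\alpha$ with respect to the natural connection on the Hopf bundle $\pi:S^3\to S^2$, so that the defining property \eqref{eq:betaprimelemma} simply says that $\beta_\alpha'(s)$ lies in the horizontal subspace $\iota_{\beta_\alpha(s)}(T_{\alpha(s)}S^2)$ and projects to $\alpha'(s)$ under $d\pi$. Concretely, I would work on each maximal subinterval on which $\alpha$ is smooth and avoids the single bad fiber $\{\xi=-1\}$ (there are finitely many such intervals since $\alpha$ has finitely many self-intersections and the bad fiber is a single point of $S^2$, which $\alpha$ either meets finitely often or along which one can reparametrize trivially), use the explicit local section
\[
\sigma(\xi,\eta)=\frac1{\sqrt2}\Bigl(\sqrt{1+\xi},\frac{\overline\eta}{\sqrt{1+\xi}}\Bigr)
\]
implicit in the preimage formula of the excerpt, and write any lift as $\beta(s)=\sigma(\alpha(s))e^{i\psi(s)}$. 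The horizontality condition \eqref{eq:betaprimelemma} becomes a first-order linear ODE $\psi'(s)=-A(\alpha(s))\cdot\alpha'(s)$, where $A$ is the connection one-form pulled back via $\sigma$; this ODE has a unique solution once $\psi(0)$ is fixed, and one checks directly from \eqref{eq:sideinv} (which encodes that $\iota_\omega$ splits $d\pi_\omega$) that the resulting $\beta$ indeed satisfies $\beta'(s)=\iota_{\beta(s)}(\alpha'(s))$ — the component of $\beta'$ along the fiber vanishes by the ODE, and the horizontal component maps to $\alpha'$ by construction, so it equals $\iota_{\beta(s)}(\alpha'(s))$ by \eqref{eq:sideinv}.

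Having built the lift on each smooth piece, I would glue: at the finitely many breakpoints, extend $\psi$ continuously (the local sections on overlapping patches differ by a smooth $S^1$-valued function, giving a smooth transition that one absorbs into $\psi$), so that $\beta_\alpha$ is continuous and piecewise smooth on all of $[0,1]$. To handle a point $s_0$ where $\alpha(s_0)$ hits the bad fiber $\xi=-1$, I would note that horizontal lift with respect to a smooth connection is defined independently of any trivialization — the horizontal distribution is globally smooth on $S^3$ — so the lift extends smoothly (or piecewise smoothly, matching the regularity of $\alpha$) across $s_0$; alternatively one rotates coordinates on $S^2$ so that the bad point moves, lifts there, and patches. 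Either way the construction is local and the global object is unambiguous up to the choice of $\beta_\alpha(0)$ in the fiber over $\alpha(0)$.

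Finally I would verify the remaining bookkeeping claims: equation \eqref{eq:pibetalemma} is immediate since $\pi\circ\sigma=\mathrm{id}$ and $\pi(\sigma(w)e^{i\psi})=\pi(\sigma(w))=w$, so $\pi\circ\beta_\alpha=\alpha$; and $\beta_\alpha$ has finitely many self-intersections because $\beta_\alpha(s)=\beta_\alpha(s')$ forces $\alpha(s)=\alpha(s')$ (as $\pi\circ\beta_\alpha=\alpha$), so the self-intersection set of $\beta_\alpha$ injects into that of $\alpha$, which is finite by hypothesis. I expect the main obstacle to be purely expository rather than mathematical: making precise that the horizontal lift is well defined and piecewise smooth across the finitely many breakpoints of $\alpha$ and across the bad fiber, without drowning in the explicit formula for the connection form $A$; the cleanest route is to invoke the standard existence-and-uniqueness theorem for horizontal lifts in a principal bundle with connection and then only use the explicit section to extract the length formula needed later (which feeds into $\phi_\alpha$ in Theorem \ref{thm:stitch}).
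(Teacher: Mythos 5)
Your proposal is correct, and at its core it is the same idea as the paper's proof: lift $\alpha$ horizontally with respect to the splitting \eqref{eq:decomp}/\eqref{eq:sideinv}, building the lift piecewise by an ODE and gluing at the breakpoints, with \eqref{eq:pibetalemma} following because the lift projects to $\alpha$ and the self-intersection count following from finiteness of the data of $\alpha$. The packaging, however, differs. The paper never introduces a trivialization or a connection one-form: it partitions $[0,1]$ so that on each piece $\alpha_j$ is smooth \emph{and injective}, defines the horizontal vector field $(V_j)_\omega=\iota_\omega(\alpha'(\alpha_j^{-1}(\pi(\omega))))$ on the preimage surface $\pi^{-1}(\alpha(I_j))$, and realizes the lift as the flow of $V_j$ (successive approximations), then deduces $\pi\circ\beta_j(\omega,\cdot)=\alpha$ from uniqueness of solutions; this avoids your chart issue at $\xi=-1$ entirely, at the cost of needing injectivity of $\alpha_j$ to define $V_j$. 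Your route — writing $\beta(s)=\sigma(\alpha(s))e^{i\psi(s)}$ and integrating the abelian connection form, or simply invoking the standard existence-and-uniqueness theorem for horizontal lifts in a principal bundle — only needs smoothness pieces, and your self-intersection argument (self-intersection pairs of $\beta_\alpha$ inject into those of $\alpha$ via $\pi\circ\beta_\alpha=\alpha$) is cleaner than the paper's piecewise no-self-intersection bookkeeping. The one loose spot is your aside that $\alpha$ meets the bad fiber ``finitely often or along which one can reparametrize trivially'': this is not justified as stated, but it is also not needed, since your fallback (the horizontal distribution is globally smooth, so the lift is trivialization-independent) or the citation of the general horizontal-lift theorem closes it; if you go the explicit-section route in writing, either make the global argument primary or rotate the chart as you suggest.
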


\begin{proof}
Consider $\alpha$ as in the lemma and a partition $0=s_0<\cdots<s_{m}=1$
of $[0,1]$ such that $\alpha$ is smooth and has no self-intersections on each interval $I_j:=[s_{j},s_{j+1}]$. Fixing $j\in\{0,...,m-1\}$, the restriction $\alpha_j:=\alpha|_{I_j}$ is then a diffeomorphism onto its image $\alpha(I_j)$, so $\pi^{-1}(\alpha(I_j))$ is a smooth submanifold (with boundary) of $S^3$. We define a vector field $V_j$ on $\pi^{-1}(\alpha(I_j))\ni\omega$ by
\[(V_j)_\omega=\iota_\omega(\alpha'(\alpha_j^{-1}(\pi(\omega)))).\]
The method of successive approximations allows us to construct the unique smooth flow 
\[\beta_{j}:\pi^{-1}(\alpha(s_{j}))\times I_j\to S^3\]
satisfying 
\begin{equation}\label{eq:gammadef1}
\begin{gathered}
\beta_{j}(\omega,s_{j})=\omega, \\
\frac{\partial\beta_{j}}{\partial s}(\omega,s)=(V_j)_{\beta_{j}(\omega,s)}
\end{gathered}
\end{equation}
for $\omega\in\pi^{-1}(\alpha(s_{j}))$ and $s\in I_j$.
For such $\omega$, we have
\begin{equation*}
(\pi\circ\beta_{j})(\omega,s_{j})=\pi(\omega)=\alpha(s_{j})
\end{equation*}
and \eqref{eq:sideinv} shows that for $s\in I_j$ satisfying $(\pi\circ\beta_{j})(\omega,s)=\alpha(s)$, we have
\begin{equation*}
\begin{split}
\frac{\partial(\pi\circ\beta_{j})}{\partial s}(\omega,s)&=\left(d\pi_{\beta_{j}(\omega,s)}\circ\frac{\partial\beta_{j}}{\partial s}\right)(\omega,s) \\
&=d\pi_{\beta_{j}(\omega,s)}((V_j)_{\beta_{j}(\omega,s)}) \\
&=\alpha'(\alpha_j^{-1}((\pi\circ\beta_{j})(\omega,s))) \\
&=\alpha'(s).
\end{split}
\end{equation*}
So, we must have
\begin{equation}\label{eq:pigammaalpha}
(\pi\circ\beta_{j})(\omega,s)=\alpha(s)
\end{equation}
for $\omega\in\pi^{-1}(\alpha(s_{j}))$ and $s\in I_j$, showing that $\beta_{j}$ has no self-intersections since $\alpha_j$ has none. 

Fix $\omega\in\pi^{-1}(\alpha(0))$. We now consider the curve $\beta_\alpha:[0,1]\to S^3$ which arises from flowing along each $\beta_j$ successively starting at $\omega$, so we have $\beta_\alpha(0)=\omega$ and $\beta_\alpha(s)=\beta_j(\beta_\alpha(s_{j}),s)$ for $j\in\{0,...,m-1\}$ and $s\in I_j$.
By this construction, we see since $\beta_j$ is smooth and has no self-intersections for each $j$ that $\beta_\alpha$ is continuous and piecewise smooth with finitely many self-intersections. Moreover, we directly see from \eqref{eq:gammadef1} and \eqref{eq:pigammaalpha} that \eqref{eq:betaprimelemma} and \eqref{eq:pibetalemma} hold.
\end{proof}

Lemmas \ref{lem:int} and \ref{lem:chpoly} are concerned with properties of the operator $I_\pi$ which takes $f\in L^1(S^3)$ to the function
\begin{equation} \label{eq:Ich}
(I_{\pi}f)(w)=\frac1{2\pi}\int_{\pi^{-1}(w)}f\,d\sigma
\end{equation}
on $S^2$ which gives the average of $f$ on each Hopf fiber (where $\sigma$ is the standard uniform measure on $\pi^{-1}(w)\cong S^1$) and the left multiplication by a base point $\omega$ isomorphism
\begin{equation} \label{eq:zetach}
\zeta_\omega:S^1\to\pi^{-1}(w),\quad\zeta\mapsto \omega\zeta
\end{equation}
we define for $\omega\in S^3$ and $w:=\pi(\omega)\in S^2$. We do not prove these lemmas, but note that proofs can be found in work of Okuda \cite[Lemmas 4.2-4.3]{Okuda15} or of the present author \cite[Lemmas 2.2-2.3, 2.5]{Lindblad24}. 

\begin{lemma}[Lemma 4.2 of Okuda \cite{Okuda15}]\label{lem:int}
For $f\in L^1(S^3)$, we have
\begin{equation*}
\frac1{|S^2|}\int_{S^2}I_\pi f\,d\sigma=\frac1{|S^3|}\int_{S^3}f\,d\sigma.
\end{equation*}
\end{lemma}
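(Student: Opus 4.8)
The identity is a Fubini/disintegration statement for the Hopf fibration: it says that breaking up the uniform measure on $S^3$ along $\pi$ returns the uniform measure on $S^2$ on the base and arclength on each fibre. The plan is to reduce to continuous $f$, prove the identity there by a uniqueness-of-invariant-measure argument, and then extend to $L^1(S^3)$. For the reduction: local triviality of $\pi$ shows that $I_\pi$ maps $C(S^3)$ into $C(S^2)$, so for $f\in C(S^3)$ both sides of the asserted identity are continuous linear functionals of $f$; once the identity holds on $C(S^3)$ it passes to non-negative $f\in L^1(S^3)$ by monotone convergence (this also records that $I_\pi f\in L^1(S^2)$ with $\|I_\pi f\|_{L^1(S^2)}=\tfrac{|S^2|}{|S^3|}\|f\|_{L^1(S^3)}$ for $f\ge0$), and then to general $f$ by splitting into positive and negative parts.

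For $f\in C(S^3)$ I would argue as follows. Put $T(f):=\tfrac1{|S^2|}\int_{S^2}I_\pi f\,d\sigma$; this is a positive linear functional on $C(S^3)$ with $T(1)=1$, since the Hopf fibres $t\mapsto\omega e^{it}$, $t\in[0,2\pi]$, are unit-speed closed curves of length $2\pi$, so that $I_\pi 1\equiv1$ by the normalization in \eqref{eq:Ich}. By the Riesz representation theorem, $T(f)=\int_{S^3}f\,d\mu$ for a unique Borel probability measure $\mu$ on $S^3$. Now $SU(2)$ acts on $S^3\subset\Bbb C^2$ by left multiplication — transitively and by isometries — and this action is $\pi$-equivariant for the standard double cover $SU(2)\twoheadrightarrow SO(3)$ acting on $S^2\subset\Bbb R^3$, i.e. $\pi(g\omega)=\bar g\,\pi(\omega)$ with $\bar g\in SO(3)$; in particular $g$ carries each fibre $\pi^{-1}(w)$ isometrically onto $\pi^{-1}(\bar g w)$, so $I_\pi(f\circ g)(w)=I_\pi f(\bar g w)$ and, integrating over $S^2$ and using that $\bar g$ preserves $\sigma$, $T(f\circ g)=T(f)$ for every $g\in SU(2)$; this says exactly that $\mu$ is $SU(2)$-invariant. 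Since $SU(2)$ is compact and acts transitively on $S^3$, there is exactly one $SU(2)$-invariant Borel probability measure on $S^3$, namely $\tfrac1{|S^3|}\sigma$; therefore $\mu=\tfrac1{|S^3|}\sigma$, which is precisely the identity of the lemma.

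The only substantive ingredient is the uniqueness of the invariant probability measure under a transitive action of a compact group, which follows by averaging against normalized Haar measure on $SU(2)$; everything else is bookkeeping. I expect that the only real care is needed in two routine places: checking the normalization $I_\pi 1\equiv1$ (so that the factor $\tfrac1{2\pi}$ in \eqref{eq:Ich} genuinely makes $I_\pi$ an average over the length-$2\pi$ fibres), and carrying out the $C(S^3)\to L^1(S^3)$ extension in the order above so as not to presuppose that $I_\pi f$ is integrable. An alternative that avoids measure theory altogether is to invoke the smooth coarea formula: with the conventions \eqref{eq:decomp}–\eqref{eq:sideinv} the Hopf map is a submersion whose normal Jacobian is constant (by the $SU(2)$-equivariance just noted), so $\int_{S^3}f\,d\sigma=J^{-1}\int_{S^2}\!\int_{\pi^{-1}(w)}f\,d\sigma\,d\sigma(w)=\tfrac{2\pi}{J}\int_{S^2}I_\pi f\,d\sigma$ for a constant $J>0$, and taking $f\equiv1$ forces $\tfrac{2\pi}{J}=\tfrac{|S^3|}{|S^2|}$, which is again the claim.
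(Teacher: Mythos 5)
The paper itself offers no proof to compare against: Lemma \ref{lem:int} is quoted from Okuda, and the text explicitly defers its proof to \cite{Okuda15} and \cite{Lindblad24}. Judged on its own, your argument for continuous $f$ is correct and complete: $I_\pi 1\equiv 1$ by the $\tfrac1{2\pi}$ normalization on the length-$2\pi$ fibres, the left $SU(2)$-action on $S^3$ is transitive, isometric, maps fibres to fibres, and is $\pi$-equivariant over the double cover $SU(2)\to SO(3)$ (e.g.\ via $\omega\mapsto 2\omega\omega^*-I$), so $T$ is an $SU(2)$-invariant state on $C(S^3)$ and uniqueness of the invariant probability measure under a transitive compact action forces $T(f)=\tfrac1{|S^3|}\int_{S^3}f\,d\sigma$. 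The coarea variant at the end is an equally valid route, and correctly treats the normal Jacobian as an unknown constant fixed by $f\equiv1$.

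The one genuine gap is the extension from $C(S^3)$ to $L^1(S^3)$. "Monotone convergence" as stated needs an increasing sequence of continuous functions converging a.e.\ to $f$, which exists only when $f$ is (a.e.\ equal to) a lower semicontinuous function; the indicator of a closed nowhere dense set of positive measure is a nonnegative $L^1$ function admitting no such approximation. Moreover, for general $f\in L^1$ part of the lemma's content is that $f$ is integrable on almost every fibre and that $w\mapsto (I_\pi f)(w)$ is measurable, neither of which follows formally from the identity on $C(S^3)$. Both points are repairable by routine measure theory: either upgrade the approximation step (Vitali--Carath\'eodory or inner/outer regularity plus a monotone class argument), or cover $S^2$ by finitely many closed trivializing charts and apply Fubini--Tonelli in each local trivialization, which delivers measurability and a.e.\ fibrewise integrability of $I_\pi f$ at once; your continuous-case identity then pins down the constant. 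With that repair the proof is complete, and it is in any case a self-contained substitute for the citation the paper relies on.
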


\begin{lemma}[Lemma 4.3 of Okuda \cite{Okuda15}]\label{lem:chpoly}
We have
\begin{gather}
I_{\pi}(P_t(S^3))=P_{\left\lfloor t/2\right\rfloor}(S^2), \label{eq:chIpoly} \\
\zeta_\omega^*(P_t(S^3))=P_t(S^1)\quad\text{\it for any}\quad\omega\in S^3.\label{eq:chzetapoly}
\end{gather}
\end{lemma}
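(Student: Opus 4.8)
The plan is to organize both identities around the weight decomposition of polynomials under the circle action $(a,b)\mapsto(\zeta a,\zeta b)$ defining the Hopf fibers. A polynomial on $\Bbb C^2\cong\Bbb R^4$ is a linear combination of monomials $a^ib^j\bar a^k\bar b^l$, and such a monomial is multiplied by $\zeta^{(i+j)-(k+l)}$ under the action; I call $m:=(i+j)-(k+l)$ its \emph{weight}. Averaging over a fiber annihilates every monomial of nonzero weight and fixes those of weight $0$, so for $f\in P_t(S^3)$ the function $I_\pi f$ is the $S^2$-function obtained by descending through $\pi$ the weight-$0$ part of any polynomial representative of $f$. The two arithmetic facts I would record first are that a weight-$0$ monomial has even total degree $i+j+k+l=2(i+j)$, and that the weight-$0$ subalgebra is generated, after restriction to $S^3$, by the pullbacks $\pi^*\xi=|a|^2-|b|^2$, $\pi^*\eta=2a\bar b$, $\pi^*\bar\eta=2\bar a b$ of the coordinate functions on $S^2$ (using $|a|^2+|b|^2=1$ to eliminate $a\bar a+b\bar b$). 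This is the Hopf-map ``degree doubling'' that makes $\lfloor t/2\rfloor$ appear.

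For \eqref{eq:chIpoly} I would argue by double inclusion. Given $f\in P_t(S^3)$, its weight-$0$ part is a sum of weight-$0$ monomials of total degree at most $t$, hence of even degree at most $2\lfloor t/2\rfloor$; rewriting each such monomial in the generators $\pi^*\xi,\pi^*\eta,\pi^*\bar\eta$ (each of $S^3$-degree $2$ and $S^2$-degree $1$) shows $I_\pi f=\pi^*g$ for some $g\in P_{\lfloor t/2\rfloor}(S^2)$, giving $I_\pi(P_t(S^3))\subseteq P_{\lfloor t/2\rfloor}(S^2)$. Conversely, for $g\in P_{\lfloor t/2\rfloor}(S^2)$ the pullback $\pi^*g$ is a weight-$0$ polynomial of $S^3$-degree at most $2\lfloor t/2\rfloor\leq t$, so $\pi^*g\in P_t(S^3)$; since $\pi^*g$ is constant on each fiber, the normalization in \eqref{eq:Ich} gives $I_\pi(\pi^*g)=g$, yielding the reverse inclusion.

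For \eqref{eq:chzetapoly} I would compute $\zeta_\omega^*$ directly on monomials. Writing $\omega=(a_0,b_0)\in S^3$, the parametrization $\zeta_\omega(\zeta)=(a_0\zeta,b_0\zeta)$ sends $a^ib^j\bar a^k\bar b^l$ to $a_0^ib_0^j\bar a_0^k\bar b_0^l\,\zeta^{m}$ with $m=(i+j)-(k+l)$ and $|m|\leq i+j+k+l\leq t$, so $\zeta_\omega^*f$ is a trigonometric polynomial of degree at most $t$, proving $\zeta_\omega^*(P_t(S^3))\subseteq P_t(S^1)$. For the reverse inclusion I would exhibit each $\zeta^m$ with $|m|\leq t$ in the image: since at least one of $a_0,b_0$ is nonzero, applying $\zeta_\omega^*$ to the degree-$|m|$ monomials $a^{|m|}$ and $\bar a^{|m|}$ (if $a_0\neq0$) or $b^{|m|}$ and $\bar b^{|m|}$ (if $b_0\neq0$) produces nonzero multiples of $\zeta^{|m|}$ and $\zeta^{-|m|}$ respectively, and these together with the constant span $P_t(S^1)$.

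The only genuinely delicate point is the degree bookkeeping in \eqref{eq:chIpoly}: one must track that restriction to $S^3$ lets us replace $a\bar a+b\bar b$ by $1$ without changing the function, so that the relevant notion of degree is that of the descended $S^2$-polynomial, and that the parity of weight-$0$ degrees forces the bound $2\lfloor t/2\rfloor$ rather than $t$. Everything else reduces to the direct monomial computations above.
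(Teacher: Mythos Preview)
The paper does not supply its own proof of this lemma; it explicitly says ``We do not prove these lemmas'' and defers to Okuda and to the author's earlier work. Your argument is correct and is precisely the standard weight-decomposition proof one finds in those references: decompose polynomials on $\Bbb C^2$ into $S^1$-weight components under the diagonal action $(a,b)\mapsto(\zeta a,\zeta b)$, identify $I_\pi$ with projection onto weight zero, and use the degree-doubling of the Hopf map to pass between $S^3$-degree and $S^2$-degree.

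The only spot where a reader might want one more line is the claim that each weight-$0$ monomial $a^ib^j\bar a^k\bar b^l$ with $i+j=k+l=:n$ rewrites as a polynomial of degree $n$ (not merely $\le 2n$) in $\pi^*\xi,\pi^*\eta,\pi^*\bar\eta$. The clean way to see this is to pair each of the $n$ holomorphic letters with one of the $n$ antiholomorphic letters, producing a product of $n$ factors from $\{a\bar a,a\bar b,\bar a b,b\bar b\}$; on $S^3$ these are $\tfrac{1+\pi^*\xi}{2},\tfrac{\pi^*\eta}{2},\tfrac{\pi^*\bar\eta}{2},\tfrac{1-\pi^*\xi}{2}$, each of $S^2$-degree $\le 1$. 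With that made explicit, both inclusions in \eqref{eq:chIpoly} and \eqref{eq:chzetapoly} go through exactly as you wrote.
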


Lemma \ref{lem:DSGex} follows from \eqref{eq:chzetapoly} paired with the fact \cite[Example 5.1.4]{Delsarte...77} noted by Delsarte, Goethals, and Seidel that the vertices of a regular $(t+1)$-gon give a $t$-design on $S^1$.

\begin{lemma}[Example 5.1.4 of Delsarte, Goethals, and Seidel \cite{Delsarte...77}]\label{lem:DSGex}
For any $\omega\in S^3$ and $f\in P_t(S^3)$, we have
\[\frac1{t+1}\sum_{j=0}^tf(\omega e^{2\pi ij/(t+1)})=(I_\pi f)(\pi(\omega)).\]
\end{lemma}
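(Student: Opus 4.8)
The plan is to prove Lemma~\ref{lem:DSGex} by combining the pullback identity \eqref{eq:chzetapoly} with the elementary quadrature fact about roots of unity, then transporting the resulting average on $S^1$ to an average on the Hopf fiber $\pi^{-1}(w)$ via the isomorphism $\zeta_\omega$, and finally identifying that fiber average with $(I_\pi f)(\pi(\omega))$ through the definition \eqref{eq:Ich}. Concretely, fix $\omega \in S^3$, set $w := \pi(\omega)$, and take any $f \in P_t(S^3)$. Define $g := \zeta_\omega^* f \in C(S^1)$, i.e.\ $g(\zeta) = f(\omega\zeta)$ for $\zeta \in S^1$. By \eqref{eq:chzetapoly} we have $g \in P_t(S^1)$, so $g$ is a restriction to $S^1$ of a polynomial of degree at most $t$ on $\Bbb R^2 \cong \Bbb C$; writing it in the Fourier basis, $g(e^{i\phi}) = \sum_{|k| \le t} c_k e^{ik\phi}$ for suitable coefficients $c_k \in \Bbb C$.

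First I would record the quadrature identity for roots of unity: for every integer $k$ with $|k| \le t$, the sum $\frac{1}{t+1}\sum_{j=0}^{t} e^{2\pi i j k/(t+1)}$ equals $1$ if $(t+1) \mid k$ and $0$ otherwise, and since $|k| \le t < t+1$ this forces the sum to be $1$ exactly when $k = 0$ and $0$ otherwise. Summing against the Fourier coefficients $c_k$ gives
\[
\frac{1}{t+1}\sum_{j=0}^{t} g\!\left(e^{2\pi i j/(t+1)}\right) = c_0 = \frac{1}{2\pi}\int_0^{2\pi} g(e^{i\phi})\,d\phi = \frac{1}{|S^1|}\int_{S^1} g\,d\sigma.
\]
This is precisely the statement that the vertices $\{e^{2\pi i j/(t+1)}\}_{j=0}^t$ of the regular $(t+1)$-gon form a $t$-design on $S^1$, which is the cited fact from \cite[Example 5.1.4]{Delsarte...77}; one may either invoke it directly or include the one-line Fourier argument above.

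Next I would translate both sides back through $\zeta_\omega$. On the left, $g(e^{2\pi i j/(t+1)}) = f(\omega e^{2\pi i j/(t+1)})$ by definition, so the left side becomes $\frac{1}{t+1}\sum_{j=0}^t f(\omega e^{2\pi i j/(t+1)})$, the desired expression. On the right, since $\zeta_\omega : S^1 \to \pi^{-1}(w)$ is an isomorphism that is an isometry (it is a composition of the identification $S^1 \cong \{\omega\zeta : \zeta \in S^1\}$ with unit-speed parametrization), it pushes the normalized uniform measure on $S^1$ to the normalized uniform measure on $\pi^{-1}(w) \cong S^1$; hence the change of variables gives
\[
\frac{1}{|S^1|}\int_{S^1} g\,d\sigma = \frac{1}{2\pi}\int_{\pi^{-1}(w)} f\,d\sigma = (I_\pi f)(w) = (I_\pi f)(\pi(\omega)),
\]
where the middle equality is the definition \eqref{eq:Ich}. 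Chaining the three displayed equalities yields the claim. I do not expect any genuine obstacle here; the only point requiring a word of care is the measure-normalization bookkeeping in the change of variables along $\zeta_\omega$ (ensuring $|\pi^{-1}(w)| = 2\pi$ and that $\zeta_\omega$ is measure-preserving after normalization), and the fact that $\zeta_\omega^* f$ genuinely lands in $P_t(S^1)$, which is supplied verbatim by Lemma~\ref{lem:chpoly}.
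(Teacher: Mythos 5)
Your proposal is correct and follows exactly the route the paper indicates: the paper does not write out a proof but states that the lemma follows from \eqref{eq:chzetapoly} combined with the Delsarte--Goethals--Seidel fact that the vertices of a regular $(t+1)$-gon form a $t$-design on $S^1$, which is precisely your chain of pulling back $f$ by $\zeta_\omega$, applying the roots-of-unity quadrature, and transporting the average back to the fiber via the isometry $\zeta_\omega$ and the definition \eqref{eq:Ich}. Your extra details (the Fourier argument and the measure-normalization bookkeeping) are sound fillings-in of that same argument.
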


\subsection{Formal treatment of the construction}
\label{sub:thm}

We now prove Theorem \ref{thm:stitch}, applying Lemma \ref{lem:lift} to assist in constructing the desired curve $\gamma_{\alpha,\varepsilon}$ and using Lemmas \ref{lem:int}, \ref{lem:chpoly}, and \ref{lem:DSGex} to show that $\gamma_{\alpha,\varepsilon}$ is a $t$-design curve.

\begin{proof}[Proof of Theorem \ref{thm:stitch}]
Consider $\alpha$ as in the theorem statement, which we reparameterize so its derivative has constant norm $|\alpha'|=\ell(\alpha)$.
Consider $\beta_\alpha$ as in Lemma \ref{lem:lift} alongside the generators $G_{t+1}$ of the cyclic group of order $t+1$.
Since $\alpha$ is a closed curve, \eqref{eq:pibetalemma} shows that $\beta_\alpha(0)$ lies in the image $\pi^{-1}(\pi(\beta_\alpha(1)))$ of the isomorphism $\zeta_{\beta_\alpha(1)}$ as in \eqref{eq:zetach}. Therefore, we may pick $g\in G_{t+1}$ minimizing $|\phi_\alpha|$ for $\phi_\alpha\in(-\pi,\pi]$ defined by
\begin{equation}\label{eq:phidef}
e^{i\phi_\alpha}=\zeta_{\beta_\alpha(1)}^{-1}(\beta_\alpha(0))e^{2\pi ig/(t+1)}\in S^1\subset\Bbb C.
\end{equation}
With $\varepsilon$ as in the theorem statement, fix 
\[\phi_\varepsilon:=\sqrt{\phi_\alpha^2+\frac{2\varepsilon\sqrt{\ell(\alpha)^2+\phi_\alpha^2}}{t+1}+\frac{\varepsilon^2}{(t+1)^2}},\]
so we have
\begin{equation}\label{eq:ddef}
\sqrt{\ell(\alpha)^2+\phi_\varepsilon^2}=\sqrt{\ell(\alpha)^2+\phi_\alpha^2}+\frac{\varepsilon}{t+1}.
\end{equation}
We may consider the partition $0=s_0<\cdots<s_{m}=1$
of $[0,1]$ arising from the union of the set of self-intersection points of $\alpha$ with $\{0,1\}$.
We define
\begin{equation*}
\begin{gathered}\label{eq:paRtition}
r_{j,\delta}:=\frac12(s_{j-1}+s_j+(s_{j}-s_{j-1})\phi_\alpha/\phi_\varepsilon)+\delta\quad\text{\it for}\quad j\in\{1,...,m-1\},\\
r_{m,\delta}:=\frac12(s_{m-1}+1+(1-s_{m-1})\phi_\alpha/\phi_\varepsilon)-(m-1)\delta
\end{gathered}
\end{equation*}
for some fixed $\delta\in[0,\Delta]$, where we have 
\[\Delta=\min\left\{s_{1}-r_{1,0},...,s_{m-1}-r_{m-1,0},\frac{r_{m,0}-s_{m-1}}{m-1}\right\}\] 
so that $r_{j,\delta}\in[s_{j-1},s_j]$ for all $j$. Setting
\[I_{+,\delta}:=\bigcup_{j=1}^{m}(s_{j-1},r_{j,\delta}),\quad I_{-,\delta}:=\bigcup_{j=1}^{m}(r_{j,\delta},s_{j}),\]
we can directly compute that
\begin{gather*}
|[0,s_j]\cap I_{+,\delta}|-|[0,s_j]\cap I_{-,\delta}|=s_j\phi_\alpha/\phi_\varepsilon+2j\delta\quad\text{\it for}\quad j\in\{1,...,m-1\},\\
|I_{+,\delta}|-|I_{-,\delta}|=\phi_\alpha/\phi_\varepsilon.
\end{gather*}
Thus, considering the unique continuous function $\theta_\delta:[0,1]\to\Bbb R$ defined by 
\begin{equation*}\label{eq:theta0}
\theta_\delta(0)=0,\quad\theta_\delta'(s)=\pm\phi_\varepsilon\quad\text{\it for}\quad s\in I_{\pm,\delta},
\end{equation*}
we have that
\begin{equation}\label{eq:theta1}
\begin{gathered}
\theta_\delta(s_j)=s_j\phi_\alpha+2j\delta\phi_\varepsilon\quad\text{\it for}\quad j\in\{0,...,m-1\},\\
\theta_\delta(1)=(|I_{+,\delta}|-|I_{-,\delta}|)\phi_\varepsilon=\phi_\alpha.
\end{gathered}
\end{equation}
Defining functions
\[q(s)=\frac{\left\lfloor(t+1)s\right\rfloor}{t+1},\quad r(s)=(t+1)(s-q(s))\]
for $s\in[0,1]$, we consider the continuous, piecewise smooth curve 
\[\widetilde\gamma_{\alpha,\delta}:=(\beta_\alpha\circ r)e^{i(\theta_\delta\circ r+2\pi gq)}:[0,1]\to S^3.\]
We may directly see that 
\[\widetilde\gamma_{\alpha,\delta}(1)=\beta_\alpha(0)=\widetilde\gamma_{\alpha,\delta}(0),\]
so $\widetilde\gamma_{\alpha,\delta}$ is a closed curve.
Observe from \eqref{eq:pibetalemma} that
\begin{equation*}\label{eq:pigammaa2}
\pi\circ\widetilde\gamma_{\alpha,\delta}=\pi\circ\beta_\alpha\circ r=\alpha\circ r,
\end{equation*}
so $\widetilde\gamma_{\alpha,\delta}$ may only have a self-intersection at a point $s\in[0,1]$ if there exists $\tilde s\in[0,1]$ such that $r(s)=r(\tilde s)$ or if $r(s)$ is a self-intersection point of $\alpha$. If $r(s)=r(\tilde s)$, we have $\tilde s=s+k/(t+1)$ for some $k\in\{-t,...,t\}$, so 
\[\widetilde\gamma_{\alpha,\delta}(\tilde s)=\beta_\alpha(r(s))e^{i(\theta_\delta(r(s))+2\pi g(q(s)+k/(t+1)))}=\widetilde\gamma_{\alpha,\delta}(s)e^{2\pi igk/(t+1)}\neq \widetilde\gamma_{\alpha,\delta}(s)\]
since $g$ is a generator of the cyclic group of $t+1$ elements and thus $gk$ will not be an integer multiple of $t+1$. Therefore, $\widetilde\gamma_{\alpha,\delta}$ may only have self-intersections in
\[r^{-1}(\{s_j\}_{j=0}^m)=\left\{s_{j,k}:=\frac{s_j+k}{t+1}\:\middle|\:j\in\{0,...,m\},\:k\in\{0,...,t\}\right\}.\]
For any $k\in\{0,...,t\}$, \eqref{eq:theta1} and \eqref{eq:phidef} show that
\begin{equation*}
\begin{gathered}
\label{eq:gammasjk}
\widetilde\gamma_{\alpha,\delta}(s_{j,k})=\beta_\alpha(s_j)e^{i(s_j\phi_\alpha+2j\delta\phi_\varepsilon+2\pi gk/(t+1))}\quad\text{\it for}\quad j\in\{0,...,m-1\},\\
\widetilde\gamma_{\alpha,\delta}(s_{m,k})=\beta_\alpha(1)e^{i(\phi_\alpha+2\pi gk/(t+1))}=\beta_\alpha(0)e^{2\pi ig(k+1)/(t+1)},
\end{gathered}
\end{equation*}
so we can see that there will only be finitely many $\delta\in[0,\Delta]$ such that $\widetilde\gamma_{\alpha,\delta}$ will have any self-intersections.
We take $\gamma_{\alpha,\varepsilon}:=\widetilde\gamma_{\alpha,0}$ when $\varepsilon=0$. Otherwise, we have $\Delta>0$, so we may pick $\delta\in[0,\Delta]$ such that $\widetilde\gamma_{\alpha,\delta}$ has no self-intersections and label $\gamma_{\alpha,\varepsilon}:=\widetilde\gamma_{\alpha,\delta}$.

Combining \eqref{eq:betaprimelemma}, the fact that $\iota_\omega$ is an isometry for all $\omega\in S^3$, and our assumption that $|\alpha'|$ is constant, we see that $|\beta_\alpha'|=|\alpha'|=\ell(\alpha)$, so the decomposition \eqref{eq:decomp} shows that
\begin{equation}\label{eq:deriv}
|\gamma_{\alpha,\varepsilon}'|^2=|(\beta_\alpha\circ r)'|^2+|(\theta_\delta\circ r)'|^2=(t+1)^2(\ell(\alpha)^2+\phi_\varepsilon^2)
\end{equation}
at all points where $\gamma_{\alpha,\varepsilon}$ is smooth. Thus, we have
\[\ell(\gamma_{\alpha,\varepsilon})=(t+1)\sqrt{\ell(\alpha)^2+\phi_\varepsilon^2}\]
and \eqref{eq:ddef} then shows that \eqref{eq:len} is satisfied.

To complete the proof, we need only show that for any $f\in P_t(S^3)$,
\begin{equation}\label{eq:feq}
\frac1{\ell(\gamma_{\alpha,\varepsilon})}\int_{\gamma_{\alpha,\varepsilon}}f=\frac1{|S^3|}\int_{S^3}f\,d\sigma.
\end{equation}
Picking such $f$, as \eqref{eq:deriv} shows that $|\gamma'_{\alpha,\varepsilon}|$ is constant and thus equals $\ell(\gamma_{\alpha,\varepsilon})$, we have
\begin{equation}\label{eq:design1}
\frac1{\ell(\gamma_{\alpha,\varepsilon})}\int_{\gamma_{\alpha,\varepsilon}}f=\frac1{\ell(\gamma_{\alpha,\varepsilon})}\int_0^1f(\gamma_{\alpha,\varepsilon}(s))\ell(\gamma_{\alpha,\varepsilon})\,ds=\int_0^1f(\gamma_{\alpha,\varepsilon}(s))\,ds.
\end{equation}
With $I_\pi$ as in \eqref{eq:Ich}, we then see from Lemma \ref{lem:DSGex} and since $g$ is a generator of the cyclic group of order $t+1$ that
\[\frac1{t+1}\sum_{k=0}^tf(\omega e^{2\pi igk/(t+1)})=(I_\pi f)(\pi(\omega)),\]
so applying a change of variables $s\mapsto\frac s{t+1}$, \eqref{eq:pibetalemma}, and that $|\alpha'|=\ell(\alpha)$, we have
\begin{equation}\label{eq:design2}
\begin{split}
\int_0^1f(\gamma_{\alpha,\varepsilon}(s))\,ds&=\sum_{k=0}^{t}\int_{k/(t+1)}^{(k+1)/(t+1)}f(\gamma_{\alpha,\varepsilon}(s))\,ds \\
&=\int_0^{1}\frac1{t+1}\sum_{k=0}^tf(\beta_\alpha(s)e^{i(\theta_\delta(s)+2\pi gk)})\,ds \\
&=\int_0^{1}(I_\pi f)((\pi\circ\beta_\alpha)(s))\,ds \\
&=\int_0^1(I_\pi f)(\alpha(s))\,ds \\
&=\frac1{\ell(\alpha)}\int_\alpha I_\pi f.
\end{split}
\end{equation}
We see from \eqref{eq:chIpoly} in Lemma \ref{lem:chpoly} that $I_\pi f\in P_{\left\lfloor t/2\right\rfloor}(S^2)$, so since $\alpha$ is a $\lfloor t/2\rfloor$-design, \eqref{eq:design1} and \eqref{eq:design2} combine to show that
\[\frac1{\ell(\gamma_{\alpha,\varepsilon})}\int_{\gamma_{\alpha,\varepsilon}}f=\frac1{|S^2|}\int_{S^2}I_\pi f\,d\sigma.\]
Lemma \ref{lem:int} then shows that \eqref{eq:feq} is satisfied.
\end{proof}

\section{Explicit $t$-design curves and generalizations}
\label{sec:further}

We now discuss examples of explicit $t$-design curves arising from the construction of Theorem \ref{thm:stitch}. To this end, observe that the curve $\alpha(s)=(0,\cos(2\pi s),\sin(2\pi s))$ which traces around the equator of $S^2$ is a 1-design curve on $S^2$. Taking $t\in\{2,3\}$ and building $\gamma_{\alpha,0}$ as in Theorem \ref{thm:stitch}, we produce a smooth, simple $t$-design curve 
\begin{equation}\label{eq:23desgs}
[0,1]\to S^3,\quad s\mapsto\frac1{\sqrt2}(\cos(2\pi s),\:\sin(2\pi s),\:\cos(2\pi ts)\:,-\sin(2\pi ts))
\end{equation}
on $S^3$ which we may observe has length $\pi\sqrt{2t^2+2}$. This curve for $t=3$ is pictured in Figure \ref{fig:S3ex}.

\begin{figure}
\begin{center}
\includegraphics[width=.55\textwidth]
{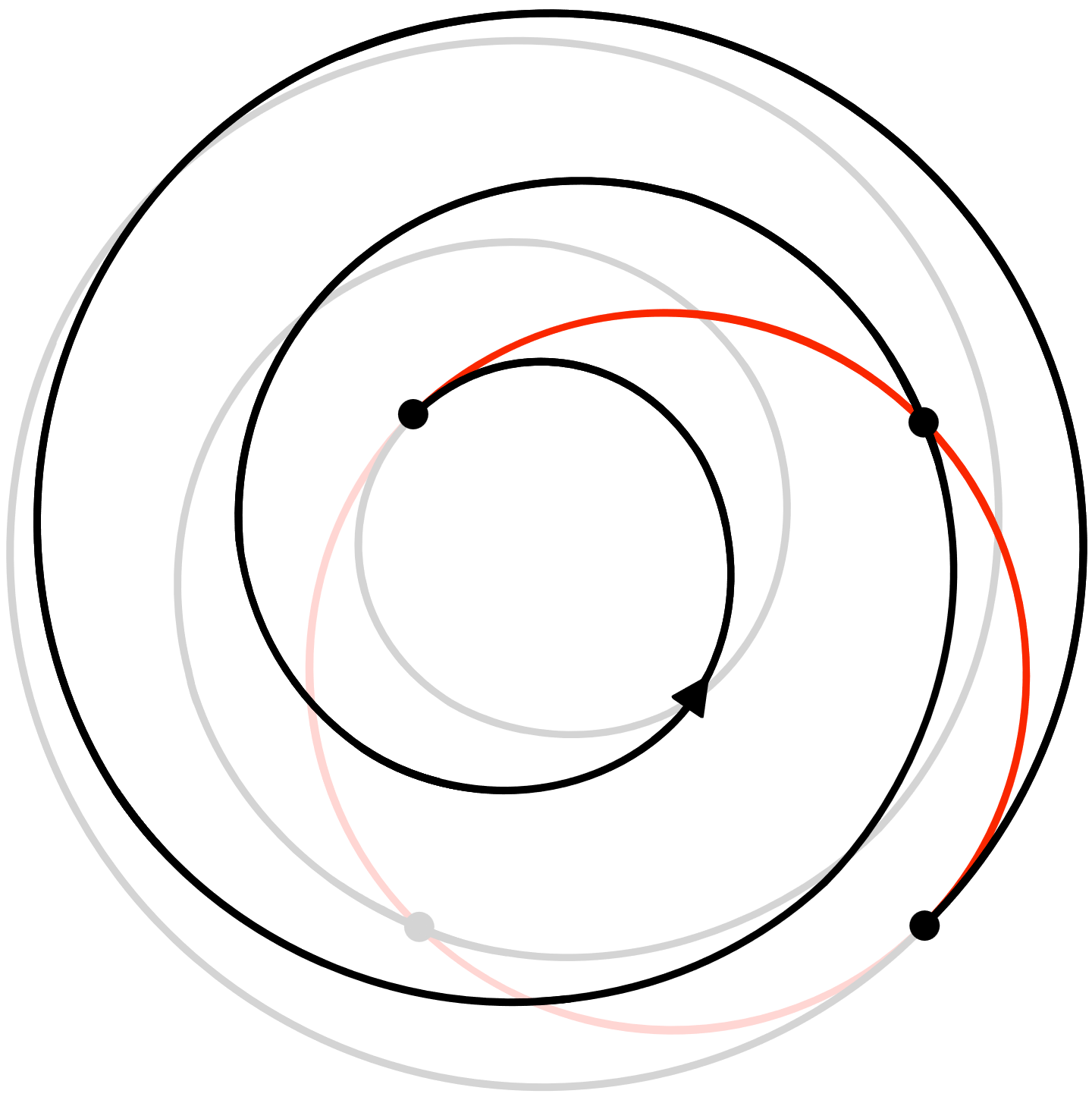}
\caption{\label{fig:S3ex} The 3-design curve $\gamma_{\alpha,0}$ on $S^3\cong\Bbb R^3\cup\{\infty\}$ as in \eqref{eq:23desgs} for $\alpha$ the curve which traces around the equator in $S^2$. $\gamma_{\alpha,0}$ lies on the Clifford torus that is the preimage of the equator of $S^2$ under the Hopf map; the segment of the curve on the same side of this torus as the observer is shown in black and the further segment is shown in gray. The preimage $\pi^{-1}(p)$ of a point on the equator of $S^2$ under the Hopf map is shown in red. Note the intersection of $\pi^{-1}(p)$ with $\gamma_{\alpha,0}([0,1])$ is the vertices of a square, a 3-design on $\pi^{-1}(p)$.}
\end{center}
\end{figure}

We can similarly use the construction of Theorem \ref{thm:stitch} to produce smooth, simple $t$-design curves on $S^3$ for $t\in\{4,5,6,7\}$ from the smooth, simple 2- and 3-design curves discovered by Ehler and Gr\"{o}chenig \cite[Example 3.3]{EhlerGrochenig23}. For $t\in\{0,...,27\}$, we can produce further explicit examples of $\lfloor t/2\rfloor$-design curves on $S^2$ by applying the construction of Ehler and Gr\"{o}chenig \cite[Section 5]{EhlerGrochenig23} which can be used to build a $\lfloor t/2\rfloor$-design curve on $S^2$ from a $\lfloor t/2\rfloor$-design on $S^2$ to the $\lfloor t/2\rfloor$-designs on $S^2$ compiled by Hardin and Sloane \cite[Table I]{HardinSloane02}, and the construction of Theorem \ref{thm:stitch} applied to these curves gives numerous additional examples of simple $t$-design curves on $S^3$.

We now discuss results related to Theorem \ref{thm:stitch} concerning $t$-design curves on spaces other than spheres. Define, for any $t\in\Bbb N$ and $n\in\Bbb N_+$, a $t$-design curve on a measure space $(\Sigma\subset\Bbb R^n,\mu)$ to be a continuous, piecewise smooth, closed curve $\gamma:[0,1]\to\Sigma$ with finitely many self-intersections whose associated line integral exactly averages the restrictions of elements of $P_t(\Bbb R^n)$ to $\Sigma$ as in Definition \ref{def:design}. Consider a $t$-design curve $\alpha$ on such $(\Sigma,\mu)$. The curve
\[[0,1]\ni s\to(\alpha((t+1)s-\left\lfloor(t+1)s\right\rfloor),e^{2\pi is})\in\Sigma\times S^1\]
can be shown exactly as in the proof of Theorem \ref{thm:stitch} (substituting for the Hopf map the projection $\Sigma\times S^1\to\Sigma$, which similarly gives rise to a principal $S^1$-bundle) to be a $t$-design curve on $(\Sigma\times S^1\subset\Bbb R^{n+2},\mu\times\sigma)$. As the curve 
\[[0,1]\ni s\mapsto e^{2\pi is}\in S^1\]
is trivially a $t$-design curve on $S^1$ for any $t\in\Bbb N$, we see that
\[\gamma_t:[0,1]\ni s\to(e^{2\pi i(t+1)s},e^{2\pi is})\in S^1\times S^1\]
is a $t$-design curve on the Clifford torus $S^1\times S^1$ of length
\[\ell(\gamma_t)=2\pi\sqrt{(t+1)^2+1}\leq2\pi\sqrt5 t.\]
We then see that for any $d\in\Bbb N_+$,
\begin{equation}\label{eq:gammatd}
\gamma_{t,d}:[0,1]\ni s\to(e^{2\pi i(t+1)^{d-1}s},e^{2\pi i(t+1)^{d-2}s},...,e^{2\pi is})\in (S^1)^d
\end{equation}
is a $t$-design curve on $(S^1)^d$ of length 
\[\ell(\gamma_{t,d})=2\pi\sqrt{\sum_{j=0}^{d-1}(t+1)^{2j}}\leq\mathscr C_{(S^1)^d}t^{d-1}\]
for the constant
\[\mathscr C_{(S^1)^d}:=2\pi\sqrt{\sum_{j=0}^{d-1}4^j}.\]

\begin{figure}
\begin{center}
\includegraphics[width=.8\textwidth]
{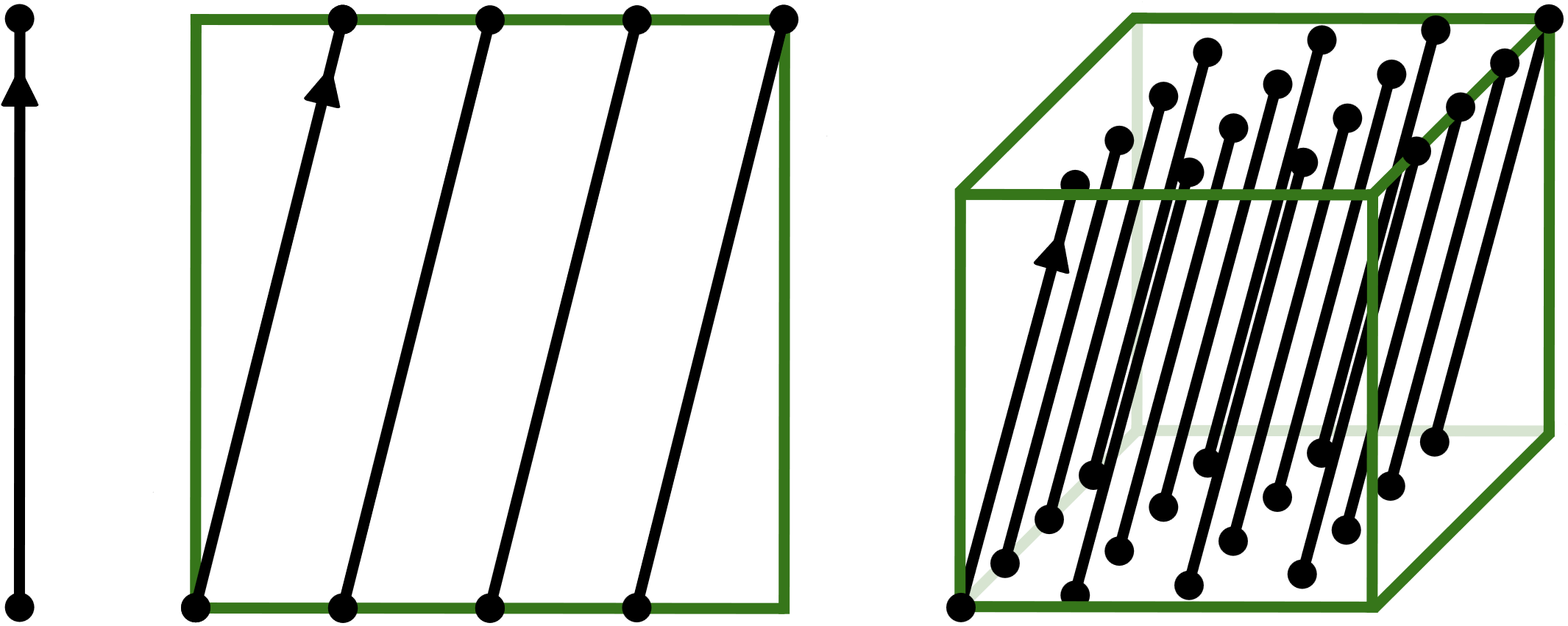}
\caption{\label{fig:Torusex} 3-design curves on $S^1$, $S^1\times S^1$, and $S^1\times S^1\times S^1$ constructed as in \eqref{eq:gammatd}. For a $\lfloor t/2\rfloor$-design curve $\alpha$ on $S^2$, the curve $\gamma_{\alpha,0}$ as in Theorem \ref{thm:stitch} can be constructed by mapping such a $t$-design curve on $S^1\times S^1$ onto the immersed torus $\pi^{-1}(\alpha([0,1]))$.}
\end{center}
\end{figure}

Noting that the curves \eqref{eq:gammatd} may be lengthened by any arbitrary amount as in the proof of Theorem \ref{thm:stitch} when $d>1$, for any such $d$, any $C\geq\mathscr C_{(S^1)^d}$, and $t\in\Bbb N_+$, we may construct a smooth, simple $t$-design curve on $(S^1)^d$ of length $Ct^{d-1}$.
We may similarly show using Theorem \ref{thm:EG2} when $n=2$ and Theorem \ref{thm:seq} when $n=3$ that there exists a constant $\mathscr C_{S^n\times (S^1)^d}$ such that for any $C\geq\mathscr C_{S^n\times (S^1)^d}$ and $t\in\Bbb N_+$, there exists a (simple, for $n=3$) $t$-design curve on $S^n\times(S^1)^d$ of length $Ct^{n-1+d}$. A proof analogous to one of Ehler and Gr\"{o}chenig \cite[Theorem 1.1]{EhlerGrochenig23} can be used to show that all of these curves achieve the minimal possible asymptotic order of length among such curves.

For $\Bbb K$ the reals or complex numbers, $k:=\dim_\Bbb R\Bbb K-1$, and $n\in\Bbb N_+$, we consider the $\Bbb K$-projective space 
\[\Bbb{KP}^n=\{[\omega]\:|\:\omega\in S^{(k+1)(n+1)-1}\subset\Bbb K^{n+1}\}\quad([\omega]:=\{\omega\zeta\:|\:\zeta\in S^k\subset\Bbb K\}),\]
which we realize as a subset of $\Bbb K^{(n+1)^2}\cong\Bbb R^{(k+1)(n+1)^2}$ via the embedding taking an element $[\omega]\in\Bbb{KP}^n$ to the matrix $\omega\omega^*$ which projects $\Bbb K^{n+1}$ to the span over $\Bbb R$ of $[\omega]$. This provides us with a notion of $t$-design curves on $\Bbb{KP}^n$. We may then use $t$-design curves on $S^2$ and $S^3$ with antipodally symmetric image (which can be built on $S^2$ using the construction described by Ehler and Gr\"{o}chenig \cite[Section 5]{EhlerGrochenig23} and on $S^3$ using the construction of Theorem \ref{thm:stitch}) to build $\left\lfloor t/2\right\rfloor$-design curves on $\Bbb{RP}^2$ and $\Bbb{RP}^3$ respectively with half the lengths of the original curves. For any $d\in\Bbb N$, we then get existence results for $t$-design curves analogous to those for $S^2\times(S^1)^d$ and $S^3\times(S^1)^d$ on $\Bbb{RP}^2\times (S^1)^d$ and $\Bbb{RP}^3\times (S^1)^d$. For any $n\in\Bbb N$, we also plan \cite{LindbladWIP} to provide an existence result for $t$-design curves on $\Bbb{CP}^n$ achieving an asymptotic order of length such that we can prove existence of $t$-design curves on $S^{2n+1}$ for $n>1$ asymptotically shorter than the current asymptotically shortest such curves shown to exist \cite[Theorem 1.3]{EhlerGrochenig23} by combining this existence result with a generalization of Theorem \ref{thm:stitch} which uses the complex projective map $S^{2n+1}\ni\omega\mapsto[\omega]\in\Bbb{CP}^n$ (which can be identified with the Hopf map when $n=1$ via the association $\Bbb{CP}^1\cong S^2$) to build a simple $t$-design curve on $S^{2n+1}$ of length approximately $(t+1)\ell(\alpha)$ from a $\lfloor t/2\rfloor$-design curve $\alpha$ on $\Bbb{CP}^n$.

\section*{Acknowledgements}
\label{sec:thanks}

The author would like to thank Henry Cohn for engaging in interesting conversations about problems related to the ones addressed by this manuscript. The author would also like to thank the School of Science, the Department of Mathematics, and the Office of Graduate Education for the MIT Dean of Science fellowship for support in their doctoral studies and would like to acknowledge NSF grant DMS-2105512 and the Simons Foundation Award \#994330 (Simons Collaboration on New Structures in Low-Dimensional Topology) for their support.

\bibliography{Curvesbib.bib}

\end{document}